\DeclareMathOperator{\conv }{conv }%
\newcommand{\R}{\mathbb R}%
\newcommand{\Z}{\mathbb Z}%
\theoremstyle{plainItalics}
\newtheorem{theorem}{Theorem}[section]
\newtheorem{corollary}[theorem]{Corollary}
\newtheorem{prop}[theorem]{Proposition}
\newtheorem{proposition}[theorem]{Proposition}
\newtheorem{definition}[theorem]{Definition}
\theoremstyle{plainNoItalics}
\numberwithin{theorem}{section}%
\numberwithin{figure}{section}%
\numberwithin{table}{section}%
\DeclareMathOperator{\aff}{aff}
\DeclareMathOperator{\DP}{DP}
\title{Polytopes associated to Dihedral Groups}
\author[Baumeister]{Barbara Baumeister}
\address{Barbara Baumeister, Universit\"at Bielefeld, Germany}
\email{b.baumeister@math.uni-bielefeld.de}
\author[Haase]{Christian Haase}
\address{Christian Haase, Goethe-Universit\"at Frankfurt, Germany}
\email{haase@mathematik.uni-frankfurt.de}
\author[Nill]{Benjamin Nill}
\address{Benjamin Nill, Case Western Reserve University, Cleveland, OH, USA}
\email{benjamin.nill@case.edu}
\author[Paffenholz]{Andreas Paffenholz}
\address{Andreas Paffenholz, Technische Universit\"at Darmstadt, Germany}
\email{paffenholz@mathematik.tu-darmstadt.de}
\date{\today}
\begin{document}

\maketitle

\begin{abstract}
In this note we investigate the convex hull of those $n \times
n$-permutation matrices that correspond to symmetries of a regular
$n$-gon. We give the complete facet description.
As an application, we show that this yields a Gorenstein polytope, and
we determine the Ehrhart $h^*$-vector.
\end{abstract}
\section{Introduction}

To any finite group $G$ of real $(n\times n)$-permutation matrices we
can associate the {\em permutation polytope} $P(G)$ given by the
convex hull of these matrices in the vector space $\R^{n \times n}$.
A well-known example of such a polytope is the Birkhoff polytope
$B_n$, which is 
defined as the convex hull of all $(n\times n)$-permutation
matrices~\cite{0355.15013,0839.52007}. This polytope appears in various
contexts in mathematics from optimization to statistics to enumerative
combinatorics. (See, e.g.,
\cite{0581.05038,Onn93,Pak00,1082.90081,1077.52011}.)
It is also a famous example of a Gorenstein polytope (see Section 5).
Gorenstein polytopes turn up in connection to Mirror Symmetry in
theoretical physics.

Guralnick and Perkinson \cite{1108.52014} studied polytopes associated
to general subgroups $G$ and proved results about their dimension, and
about the diameter of their vertex-edge graph. A systematic exposition
of general permutation polytopes is given in~\cite{Baumeister2007}.
There, we studied which groups lead to affinely equivalent polytopes,
we considered products of groups and polytopes, classified
low-dimensional cases, and we formulated several open conjectures.

In order to get an intuition about what one can expect in general, it
is instructive to consider some special classes of permutation groups.
A seemingly very difficult case is when $G$ equals the group of
even permutation matrices.
Just to exhibit exponentially many
facets is already a daunting task, for this see~\cite{1103.52010}.
Even for cyclic $G$ we showed in~\cite{1109.0191} that these polytopes
have a surprisingly complex and not yet fully understood facet
structure.

In \cite{CP04} Collins and Perkinson studied polytopes given by
Frobenius groups.  A special case is the dihedral group $D_n$ for $n$
odd, which was considered in more detail by Steinkamp
\cite{Ste99}. Since $D_n$ is the automorphism group of a regular
$n$-gon, the cases where $n$ is even and odd are quite different. 

The most recent paper on permutation polytopes \cite{loera} focused on
determining the volumes of permutation polytopes associated to cyclic
groups, dihedral groups, and Frobenius groups. In order to compute the
volume of $P(D_n)$, the authors find a Gale dual combinatorial
description, which they use to provide an explicit formula for the
Ehrhart polynomial of $P(D_n)$.

The dihedral group $D_n$ is the automorphism group $\mathrm{Aut}(C_n)$
of a cycle $C_n$, and any permutation matrix $M(\sigma)$ of an element
$\sigma\in D_n$ commutes with the adjacency matrix $A$ of $C_n$. So
any point in $P(D_n)$ commutes with $A$, and
\begin{align*}
  P(D_n)\subseteq \{M\in\R^{n\times n}\mid M\text{ is doubly stochastic
    and } MA=AM\}\,.
\end{align*}
Here, a matrix is doubly stochastic if all entries are non-negative
and each row and column sum is $1$. Tinhofer~\cite{0581.05038} asks,
more generally, for a classification of those undirected graphs $G$
where the two sets above are equal, i.e. where the commutation
condition $MA=AM$ already suffices to characterize the elements of
$P(\mathrm{Aut}(G))$ among all doubly stochastic matrices. The
Birkhoff-von Neumann theorem is the special case where $A$ is the unit
matrix.  Tinhofer shows that this also holds for the adjacency
matrices of cycles and trees~\cite[Thms.\ 2\&3]{0581.05038}.

In this note, we independently investigate $P(D_n)$ in a more direct
and elementary way.  We give a complete list of its facet inequalities
(Theorem~\ref{theo1}, Theorem~\ref{theo2}). As an application, we
observe that these lattice polytopes are Gorenstein polytopes, and we
get a nice description of the generating function of their Ehrhart
polynomials (Theorem~\ref{theo3}, Corollary~\ref{coro}).

\textbf{Acknowledgments:} Many results are based upon experiments and
computations using package \texttt{polymake}~\cite{GJ05} by Gawrilow
and Joswig. The first author likes to thank for support by the DFG
through the SFB 701 ``Spectral Structures and Topological Methods in
Mathematics''. The second author is supported by DFG Heisenberg (HA
4383/4-1). The third author is supported by the US National Science
Foundation (DMS 1102424). The last author is supported by a DFG
Priority Program (DFG SPP 1489).

\section{Notation and preliminary results}

Let $S_n$ be  the permutation group on $n \ge 3$  elements.  Every permutation
$\sigma\in  S_n$  can  be   represented  by  an  $n\times  n$ matrix
$M_\sigma$ with entries $\delta_{i,(j)\sigma}$.  So the entries are 
in $\{0,1\}$  and there is exactly one $1$ in each
row  and  column. Notice that we apply matrices and permutations from the
right.
   We  can  view  such  a  matrix   as  a  vector  in
$\R^{n^2}$. For a subgroup $G$ of $S_n$ we define the polytope
\begin{align*}
  P_G:=\conv(M_\sigma\mid \sigma\in G)\,.
\end{align*}
This is a $0/1$-polytope, so all  matrices are in fact vertices of the
polytope. 

We denote by   $D_n$  the   subgroup  of  $S_n$
corresponding to the symmetry group of the regular $n$-gon, the {\em dihedral  group of order $2n$}. This group
is  generated by  two  elements.  If  $n$  is odd,  then these may taken to be the
rotation $\rho$ of the $n$-gon by $360/n$ degrees, and the reflection $\tau$
along a line through one vertex and the midpoint of the opposite edge.
If $n$ is  even, then the second generator $\tau$ is instead the 
reflection along  a line through two opposite  vertices.  Thus $\rho$ is the permutation $(1,2, \ldots , n)$
and $\tau$ the reflection $(2,n)(3, n-1) \cdots ((n+1)/2, (n+3)/2)$ if $n$ is odd and $(2,n)(3, n-1) \cdots (n/2, (n/2)+2)$
if $n$ is even.

The associated permutation polytope is the convex hull of the corresponding matrices,
\begin{align*}
  \DP_n:=\conv(M_\sigma\mid \sigma \in D_n)\,.
\end{align*}

The dihedral group $D_n$ has $2n$ elements 
$$\rho^0, \rho^1, \rho^2,
\ldots,  \rho^{n-1},  \tau, \tau\rho, \tau\rho^{2},  \tau\rho^{3},
\ldots, \tau\rho^{n-1}.$$
We label the vertices of $ \DP_n$ by  $v_0, \ldots, v_{n-1},
w_0, \ldots, w_{n-1}$ in this order. Let us give a more convenient way to write these matrices.

Let $I$ be the $n$-dimensional identity matrix and $R$ be the $n\times
n$ matrix that has $0$'s everywhere except at the $n$ entries $(i,j)$,
where $0\le i,j\le n-1$ and $j\equiv i+1\mod n$:
\begin{align*}
  R=\left[
    \begin{array}{ccccccc}
      0&1&0&0&\cdots&0&0\\
      0&0&1&0&\cdots&0&0\\
      \vdots&\ddots&\ddots&\ddots&&\vdots&\vdots\\
      \vdots&&\ddots&\ddots&\ddots&\vdots&\vdots\\
      0&\cdots&\cdots&0&\cdots&1&0\\
      0&0&0&0&\cdots&0&1\\
      1&0&0&0&\cdots&0&0
    \end{array}
    \right].
\end{align*}
Reading the matrices $M_\sigma$ row by row, we can identify $M_\sigma$ with a (row) vector in $\R^{n^2}$. For instance, 
the $2\times 2$ identity matrix would be identified with $(1\ 0 \ 0\  1)$. 
Under this identification the vertices of  $\DP_n$ are
(in the order given above) the rows of the $2n \times n^2$ matrix
\begin{align*}
  \left[
    \begin{array}{ccccc}
      R^0& R^1& R^2& \cdots& R^{n-1}\\
      R^0& R^{-1}& R^{-2}& \cdots& R^{-(n-1)}
    \end{array}
  \right]\,.
\end{align*}
Permuting the coordinates (corresponding to a linear automorphism of $\R^{n^2}$) we may write the vertices in the form 
\begin{equation}
 V= \left[
    \begin{array}{ccccc}
      I& I& I& \cdots& I\\
      I& R^{-2}& R^{-4}& \cdots& R^{-2(n-1)}
    \end{array}
  \right]\,.
\label{main-eq}
\end{equation}
Clearly, the first $2n$ coordinates of the vertices linearly determine
the remaining coordinates. So we can project onto $\R^{2n}$ without
changing the combinatorics of the polytope. Hence, we observe that the
dimension of $\DP_n$ is at most $2n$.

\section{The situation for odd $n$}

In this section we complety describe $\DP_n$ for $n$ odd. As it will turn out, 
it is useful to introduce a new polytope that will serve as a basic building block for
both situations of even $n$ and odd $n$. 

\begin{definition}{\rm Let $Q_n$ be the polytope defined as the convex
hull of the rows of the $2n \times n^2$-matrix
\begin{align}
  W:=\left[
    \begin{array}{lllll}
      I&I&I&\ldots&I\\
      I &R^1&R^2&\ldots&R^{n-1}
    \end{array}\right]\,.\label{eq:qnvert}
\end{align}}
\end{definition}

While $Q_n$ differs from $\DP_n$ for even $n$, 
for odd $n$ the $R^{2k}$ for $0\le k \le n-1$ are a permutation of the
$R^k$ for $0\le k \le n-1$. So we deduce from
(\ref{main-eq}) that, for $n$ odd, $Q_n$ is up to a permutation of
coordinates just the polytope $\DP_n$.

\begin{proposition}
  For odd $n$, the polytopes $\DP_n$ and $Q_n$ are affinely
  isomorphic.\qed
\end{proposition}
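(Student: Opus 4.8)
The plan is to make the remark preceding the statement fully precise by exhibiting an explicit coordinate permutation that carries the vertex matrix $V$ of \eqref{main-eq} onto the vertex matrix $W$ of \eqref{eq:qnvert}. Both matrices are built from $n$ column blocks of width $n$; in each case the top $n$ rows of every block equal the identity $I$, and the blocks differ only in their bottom $n$ rows, which are $R^{-2j}$ in $V$ and $R^{j}$ in $W$ as $j$ runs over $0,\dots,n-1$. So the whole problem reduces to matching up the two indexing sets $\{-2j \bmod n\}$ and $\{0,1,\dots,n-1\}$ of exponents of $R$.

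First I would record the arithmetic input: since $n$ is odd we have $\gcd(2,n)=1$, so the map $j\mapsto -2j \bmod n$ is a bijection of $\Z/n\Z$. Setting $\pi(j):=-2j \bmod n$, this means that the $j$-th column block of $V$ coincides with the $\pi(j)$-th column block of $W$, because both have top part $I$ and bottom part $R^{-2j}=R^{\pi(j)}$ (using $R^{n}=I$). As $j$ ranges over all residues, $\pi(j)$ does too, so every block of $W$ is accounted for exactly once.

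Next I would promote $\pi$ to a permutation of the $n^2$ coordinates of $\R^{n^2}$ by reordering the $n$ column blocks of $V$ according to $\pi$, leaving the internal order of each width-$n$ block intact. Permuting coordinates is a linear automorphism of $\R^{n^2}$, and applied to the rows of $V$ it leaves the top block of every column untouched (all are $I$) while sending the bottom block $R^{-2j}$ into position $\pi(j)$, where it reads $R^{\pi(j)}$. Thus the reordered matrix is literally $W$, with the row order unchanged, so the $i$-th vertex of $\DP_n$ is carried to the $i$-th vertex of $Q_n$. Taking convex hulls, the same linear map sends $\DP_n$ onto $Q_n$, proving that they are affinely (indeed linearly) isomorphic.

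I do not expect a genuine obstacle here; the only point that needs care is the bookkeeping. One must check that the permutation acts on entire column blocks rather than on individual coordinates, so that the identity top blocks and the power bottom blocks $R^{-2j}$ are moved as indivisible units, and one must confirm that the row ordering of $V$ is preserved, which it is, since permuting columns never mixes rows. Granting these, the affine isomorphism follows at once from the bijectivity of $j\mapsto -2j \bmod n$ for odd $n$.
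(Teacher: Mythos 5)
Your proof is correct and is precisely the paper's argument made explicit: the paper also observes that for odd $n$ the powers $R^{-2j}$ (equivalently $R^{2k}$) run through all of $R^0,\dots,R^{n-1}$, so that a permutation of the $n$ column blocks of $V$ in \eqref{main-eq} yields $W$ in \eqref{eq:qnvert}, giving the affine (indeed linear) isomorphism. Your added bookkeeping---the bijection $j\mapsto -2j \bmod n$ from $\gcd(2,n)=1$ and the block-wise action on coordinates---is exactly the detail the paper leaves implicit.
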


The following theorem examines the structure of $Q_n$ for arbitrary
$n$. For $n$ odd, this result is a special case of Thm.~4.4 in
\cite{CP04}.

Let us fix some convenient notation. We denote by $\Delta_r$ the
$r$-dimensional simplex.  We also use for any two integers $s,k$, the
term $[s]_k \in \{0, \ldots, k-1\}$ to denote the remainder of $s$
upon division by $k$. The {\em free sum} of two
polytopes $P$ and $P^\prime$ of dimensions $d$ and $d^\prime$ is the polytope
$$P \oplus P^\prime:= \conv(\{(p,0) \in \R^{d+d^\prime}~|~p \in P\} \cup \{(0,p^\prime) \in \R^{d+d^\prime}~|~p^\prime \in P^\prime\}).$$

\begin{theorem}[Collins\&Perkinson~\cite{CP04}] Let $n$ be even or odd.
  The polytope $Q_n$ has dimension $2n-2$ and is a free sums of two
  copies of $\Delta_{n-1}$. Taking coordinates $x_0, \ldots, x_{n^2-1}$ for $\R^{n \times n}$, 
its affine hull is given by the
  equations
  \begin{align}
    1&=\sum_{i=ln}^{(l+1)n-1}x_i\tag{aff}\label{eq:aff}\\
    0&=x_{kn+[j]_n}-x_{(k+1)n+[j]_n}
    -x_{(k+1) n+[j+1]_n}+x_{(k+2) n+[j+1]_n}\tag{$A_{j,k}$}\label{eq:Ajk}
  \end{align}
  for $0\le l\le n-1$, $0\le j\le n-2$, $0\le k\le n-3$.

  An irredundant  system of inequalities defining  the polytope inside
  its affine hull is given by the inequalities
  \begin{align*}
    x_i\ge 0
  \end{align*}
  for $0\le i\le n^2-1$.
\label{theo1}
\end{theorem}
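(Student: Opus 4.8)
The plan is to prove the whole statement by first identifying $Q_n$ explicitly as a free sum of two simplices, and then reading off the dimension, the affine hull, and the facets from that description. Writing a point of $\R^{n^2}=\R^{n\times n}$ in the grid coordinates $x_{mn+p}$ with $0\le m,p\le n-1$ (so $m$ indexes the block and $p$ the position inside it), the $2n$ rows of $W$ split into two families of vertices: the \emph{constant} vertices $a_k$ ($0\le k\le n-1$), whose every block equals the unit vector $e_k$, so that $x_{mn+p}(a_k)=\delta_{p,k}$; and the \emph{shifted} vertices $b_k$ from the second block row, whose $m$-th block is $e_{[k+m]_n}$, so that $x_{mn+p}(b_k)=\delta_{p,[k+m]_n}$. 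I would set $\Delta:=\conv(a_0,\dots,a_{n-1})$, $\Delta':=\conv(b_0,\dots,b_{n-1})$, and let $c:=(\tfrac1n\mathbf 1,\dots,\tfrac1n\mathbf 1)$ be their common centroid (a direct averaging shows both barycentres equal $c$).

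First I would establish the free sum. The $a_k$ have pairwise disjoint supports (the constant-$p$ columns), and the $b_k$ are the flattened circulant matrices $R^0,\dots,R^{n-1}$, whose supports are the $n$ distinct cyclic diagonals; hence each family is linearly, so affinely, independent and $\Delta\cong\Delta'\cong\Delta_{n-1}$. Let $U$ and $V$ be the linear spans of $\{a_k-a_0\}$ and $\{b_k-b_0\}$. One identifies $U$ with the vectors that are constant across blocks and sum to zero within each block, whereas a vector of $V$ has $m$-th block $\sum_k c_k e_{[k+m]_n}$ with $\sum_k c_k=0$; comparing the $p$-entry of block $m$, which equals $c_{[p-m]_n}$, shows that a common vector forces all $c_k$ equal, hence zero, so $U\cap V=\{0\}$. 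Since $\aff\Delta=c+U$ and $\aff\Delta'=c+V$ meet only in $c$, which is relatively interior to each simplex, and their direction spaces are independent, $Q_n=\conv(\Delta\cup\Delta')$ is the free sum $\Delta\oplus\Delta'\cong\Delta_{n-1}\oplus\Delta_{n-1}$, of dimension $(n-1)+(n-1)=2n-2$.

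Next I would pin down the affine hull. Equation \eqref{eq:aff} holds on every vertex because each block of a $0/1$ vertex carries a single $1$. For \eqref{eq:Ajk} the clean device is to introduce the block-to-block difference $d_{i,p}:=x_{in+p}-x_{(i+1)n+p}$; grouping shows that $(A_{j,k})$ is exactly the relation $d_{k,j}=d_{k+1,j+1}$. On a constant vertex all $d_{i,p}$ vanish, and on a shifted vertex $d_{i,p}=\delta_{p,[t+i]_n}-\delta_{p,[t+i+1]_n}$ is itself shifted by one, so $d_{i,j}=d_{i+1,j+1}$ holds. The number of equations is $n+(n-1)(n-2)=n^2-2n+2$, exactly the codimension of a $(2n-2)$-dimensional affine subspace of $\R^{n^2}$; since all of them vanish on $\aff Q_n$, it remains only to prove they are linearly independent. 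This I would do by triangular elimination: the leading variable $x_{(k+2)n+[j+1]_n}$ of \eqref{eq:Ajk} has block index $\ge 2$ and position $\ge 1$, and for the top block $n-1$ it occurs in no other $(A_{j,k})$, which lets one peel off the $(A_{j,k})$ block by block, while the \eqref{eq:aff} bring in the position-$0$ variables, which are never leading terms of an $(A_{j,k})$. Equivalently, solving the homogeneous system through $d_{k,j}=d_{k+1,j+1}$ exhibits its solution space as precisely $U+V$. This independence bookkeeping is the one genuinely technical point of the argument.

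Finally I would treat the facets. Each $x_i\ge 0$ is valid because all vertices are $0/1$. Reading off which vertices satisfy $x_{mn+p}=0$ yields exactly the $a_k$ with $k\ne p$ and the $b_k$ with $k\ne[p-m]_n$, that is, a facet of $\Delta$ (omitting $a_p$) together with a facet of $\Delta'$ (omitting $b_{[p-m]_n}$); as these two omitted-vertex simplices lie in the transversal spaces $U$ and $V$ and avoid the common centre $c$, their join has dimension $(n-2)+(n-2)+1=2n-3$ and is therefore a facet of the free sum. The map $(m,p)\mapsto(p,[p-m]_n)$ is a bijection from the $n^2$ coordinates onto the $n^2$ pairs (facet of $\Delta$, facet of $\Delta'$) indexing the facets of $\Delta_{n-1}\oplus\Delta_{n-1}$, so the inequalities $x_i\ge 0$ are exactly the facets: the list is complete and irredundant, which finishes the proof.
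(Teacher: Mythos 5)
Your proof is correct, and while it shares the paper's central object---the free sum decomposition of $Q_n$ into two $(n-1)$-simplices meeting at the point $\frac1n(1,\ldots,1)$---the internal logic runs along a genuinely different route. The paper works from the vertex matrix $W$: it asserts without computation that the $n^2-2n+2$ equations are linearly independent (giving $\dim Q_n\le 2n-2$), then observes that deleting any row of $W$ leaves a column with exactly one $1$, so any $2n-1$ rows are affinely independent; this one-line rank certificate yields both the dimension lower bound and the fact that every facet has exactly $2n-2$ vertices, which is how the paper certifies that each $x_i\ge 0$ (vanishing on exactly $2n-2$ rows) is a facet. You instead front-load the geometry: the explicit transversality computation $U\cap V=\{0\}$ for the direction spaces of the two simplices establishes the free sum and hence the dimension at the outset; facet-ness of $x_{mn+p}\ge 0$ then follows from identifying its zero set as the join of the facet of $\Delta$ omitting $a_p$ with the facet of $\Delta'$ omitting $b_{[p-m]_n}$, plus the direct dimension count $(n-2)+(n-2)+1=2n-3$; and your bijection $(m,p)\mapsto(p,[p-m]_n)$ makes completeness and irredundancy explicit. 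Both arguments rely on the same final device (the dual of the free sum is $\Delta_{n-1}\times\Delta_{n-1}$, so there are exactly $n^2$ facets). What your route buys is that the independence of the equations, which the paper dismisses as ``easily seen,'' is actually proved: your triangular elimination is sound---the variables $x_{(n-1)n+p}$ with $p\ge 1$ occur in exactly one equation $(A_{p-1,n-3})$, the position-$0$ variables occur only in the block-sum equations, and peeling blocks from the top downward kills all coefficients---and so is the reformulation of $(A_{j,k})$ as $d_{k,j}=d_{k+1,j+1}$ for the differences $d_{i,p}=x_{in+p}-x_{(i+1)n+p}$. What the paper's route buys is brevity: the row-deletion observation is a slicker certificate of affine independence than your (equally valid, slightly longer) direction-space computation, and ``every facet has $2n-2$ vertices'' disposes of facet-ness without any join-dimension bookkeeping. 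One cosmetic quibble: the omitted-vertex simplices lie in the affine flats $c+U$ and $c+V$, not in $U$ and $V$ themselves, though your dimension count is unaffected since you correctly use that neither contains $c$.
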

\begin{proof}
  All the given equations are satisfied by the vertices of $Q_n$.
  There are $n$ equations of type (\ref{eq:aff}) and $n^2-3n+2$
  equations of type (\ref{eq:Ajk}).  They are easily seen to be linearly
  independent, so the dimension of $Q_n$ is at most $2n-2$.  On the
  other hand,
  deleting any row of $W$ leaves us with a linearly (and hence
  affinely) independent set of row vectors.
  (Observe that deleting a row leaves us with a column that contains
  exactly one $1$.) Hence, $\dim(Q_n)=2n-2$ and the given equations
  define the affine hull of $Q_n$ in $\R^{n^2}$.

  Further, we see that any $2n-1$ of the $2n$ rows of $W$ span the
  affine hull of $Q_n$. So  any facet of $Q_n$  has $2n-2$
  vertices. Since the inequalities $x_j\ge  0$ are $0$ on exactly
  $2n-2$ of the rows, they all define facets. 

  In order to prove that $Q_n$ is a free sum of simplices we observe
  that the first $n$ and the last $n$ vertices define
  $(n-1)$-dimensional simplices sitting in transversal subspaces
  (intersecting in the matrix corresponding to the row vector $(1/n,
  \ldots, 1/n)$). Therefore, the combinatorial dual of $Q_n$
  corresponds to the product of $\Delta_{n-1}$ with itself. In
  particular, $Q_n$ has precisely $n^2$ facets, so the facet
  description given above is complete.
\end{proof}

\section{The situation for even $n$}

Recall that the \emph{join} $P\star Q$ of two polytopes $P$ and $Q$ is
the convex hull of $P \cup Q$ after embedding $P$ and $Q$ in skew
affine subspaces. The dimension of $P\star Q$ equals
$\dim(P)+\dim(Q)+1$. For instance, the join of two intervals is a
tetrahedron.

\begin{theorem}
  Let $n$ be even. The polytope $\DP_n$ is a join of two copies of
  $Q_{n/2}$. In particular, its dimension is $2n-3$.
\label{theo2}
\end{theorem}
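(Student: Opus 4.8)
The plan is to partition the $2n$ vertices of $\DP_n$ into two blocks of $n$ that live in complementary coordinate subspaces, recognize each block as an affine copy of $Q_{n/2}$, and then check that the two affine hulls are skew, so that their convex hull is exactly the join.

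Write $n=2m$ and index the coordinates of $\R^{n\times n}$ by pairs $(a,b)$, $0\le a,b\le n-1$, where $(a,b)$ is position $b$ in the $a$-th block of length $n$. Reading the rows of $V$ from (\ref{main-eq}), the vertices are $v_k$, whose every block equals $e_k$, and $w_k$, whose $a$-th block equals $e_{[k-2a]_n}$, for $0\le k\le n-1$. In each block the single nonzero entry sits in a position of the same parity as $k$, so every vertex is supported entirely on even or entirely on odd positions. Letting $L_0,L_1$ be the spans of the even-$b$ and odd-$b$ coordinates, we get $\R^{n^2}=L_0\oplus L_1$, and the sets $E:=\{v_k,w_k : k\text{ even}\}\subseteq L_0$ and $O:=\{v_k,w_k : k\text{ odd}\}\subseteq L_1$ each contain $n=2m$ points.

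First I would identify $\conv(E)$ with $Q_m$. Relabelling the even positions as $0,\dots,m-1$ turns the $a$-th block of $v_{2i}$ into $e_i$ and that of $w_{2i}$ into $e_{[i-a]_m}$. Because $-2m\equiv 0\pmod n$, blocks $a$ and $a+m$ coincide on $E$, so $E$ is faithfully recorded by its first $m$ blocks, a point set in $\R^{m^2}$ reading $(e_i,\dots,e_i)$ and $(e_i,e_{i-1},\dots,e_{i-(m-1)})$. Reversing the position order inside each block (the linear involution $e_c\mapsto e_{-c}$) and relabelling vertices by $i\mapsto -i$ then carries these to the rows of $W$ in (\ref{eq:qnvert}), so $\conv(E)\cong Q_m$; the symmetric argument gives $\conv(O)\cong Q_m$. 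By Theorem~\ref{theo1} both have dimension $2m-2=n-2$. This paragraph is where I expect the real work to lie: keeping track of the redundancy $a\leftrightarrow a+m$ together with the position-reversal needed to match the sign convention of (\ref{eq:qnvert}) is the only genuinely fiddly step, everything else being forced.

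Finally I would verify skewness. Every vertex of $E$ has block-sum $1$ in each block, a condition inherited by all of $\aff(E)$; the affine functional ``sum of the even-position coordinates in block $0$'' is therefore $1$ on $\aff(E)$ and $0$ on $\aff(O)$, so the two affine hulls are disjoint. For the dimensions, the direction spaces $U_E\subseteq L_0$ and $U_O\subseteq L_1$ meet only in $0$ and consist of vectors with all block-sums zero, whereas any connecting vector $w_{k'}-v_k$ (with $k$ even, $k'$ odd) has block-sum $1$ in its $L_1$-part and hence lies outside $U_E\oplus U_O$. Thus $\dim\aff(E\cup O)=(n-2)+(n-2)+1=2n-3$, which is precisely the skewness condition, and we conclude $\DP_n=\conv(E)\star\conv(O)=Q_m\star Q_m$, of dimension $2n-3$.
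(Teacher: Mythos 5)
Your proof is correct and takes essentially the same route as the paper: the identical parity partition of the $2n$ vertices into two point sets supported on complementary coordinate subspaces, each identified with $Q_{n/2}$ by coordinate permutations, followed by the join conclusion. Your explicit functional and dimension count for skewness is a hands-on version of the paper's one-line observation that $0 \notin \aff(Q_{n/2})$ (all block sums equal $1$), and your treatment of the duplicated blocks $a$ and $a+m$ replaces the paper's initial projection of $V$ onto half of the coordinates.
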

Combined with Theorem~\ref{theo1}, this result gives a complete
description of the facet inequalities and the affine hull equations of
$\DP_n$ for $n$ even.
\begin{proof}
Permuting the coordinates, we can transform $V$ (see (\ref{main-eq})) into 
\begin{align*}
  \left[
    \begin{array}{ccccccccccc}
      I&I&I&\cdots&I&\quad&I&I&I&\cdots &I\\
      R^0&R^2&R^4&\cdots&R^{n-2}&&R^0&R^2&R^4&\cdots&R^{n-2}
    \end{array}
  \right]
\end{align*}
Clearly, projecting  onto  the first  $\frac{n}{2}$  coordinates yields an affine isomorphism 
of $\DP_n$ onto the convex hull of the rows of the $2n\times \frac{n^2}{2}$-matrix
\begin{align*}
  \left[
    \begin{array}{lllll}
      I&I&I&\ldots&I\\
      R^0&R^2&R^4&\ldots&R^{n-2}
    \end{array}\right]\,.\label{eq:qnpvert}
\end{align*}
In the representation given by this matrix let us partition the set of
$2n$ vertices (labelled from $0$ to $2n-1$) into two sets: consisting
of the $n$ rows with even index and the $n$ rows with odd index.
\newcommand{\even}{\color{red}}
\newcommand{\odd}{\color{blue}}
\begin{equation*} \tiny
  \begin{array}{|c@{\ }c@{\ }c@{\ }c|c@{\ }c@{\ }c@{\ }c|}
    \hline
    \odd 1&&& & \odd 1&&& \\
    &\even 1&& & &\even 1&& \\
    &&\odd 1& & &&\odd 1& \\
    &&&\even 1 & &&&\even 1 \\
    \hline
    \odd 1&&& & &&\odd 1& \\
    &\even 1&& & &&&\even 1 \\
    &&\odd 1& & \odd 1&&& \\
    &&&\even 1 & &\even 1&& \\
    \hline
  \end{array}
  \quad \overset{\text{sort rows}}{\leadsto} \quad
  \begin{array}{|c@{\ }c@{\ }c@{\ }c|c@{\ }c@{\ }c@{\ }c|}
    \hline
    \odd 1&&& & \odd 1&&& \\
    &&\odd 1& & &&\odd 1& \\
    \odd 1&&& & &&\odd 1& \\
    &&\odd 1& & \odd 1&&& \\
    \hline
    &\even 1&& & &\even 1&& \\
    &&&\even 1 & &&&\even 1 \\
    &\even 1&& & &&&\even 1 \\
    &&&\even 1 & &\even 1&& \\
    \hline
  \end{array}
  \quad \overset{\text{sort columns}}{\leadsto} \quad
  \begin{array}{|c@{\ }c|c@{\ }c|c@{\ }c|c@{\ }c|}
    \hline
    \odd 1&&\odd 1& & \multicolumn{4}{c|}{\ } \\
    &\odd 1&&\odd 1 & \multicolumn{4}{c|}{\ } \\
    \cline{1-4}
    \odd 1&&&\odd 1 & \multicolumn{4}{c|}{\ } \\
    &\odd 1&\odd 1& & \multicolumn{4}{c|}{\ } \\
    \hline
    \multicolumn{4}{|c|}{\ } & \even 1&&\even 1& \\
    \multicolumn{4}{|c|}{\ } & &\even 1&&\even 1 \\
    \cline{5-8}
    \multicolumn{4}{|c|}{\ } & \even 1&&&\even 1 \\
    \multicolumn{4}{|c|}{\ } & &\even 1&\even 1& \\
    \hline
  \end{array}
\end{equation*}
Then we permute the 
$\frac{n^2}{2}$ coordinates in such a way that in the first set of
rows (corresponding to even vertices) all nonzero entries are in the
first half (i.e., in the first $\frac{n^2}{4}$ columns). Then all
nonzero entries in the second set of rows (corresponding to the odd
vertices) will be in the second half (i.e., in the last
$\frac{n^2}{4}$ columns).  By a permutation of the coordinates within
the first half we get that the rows of even vertices yields precisely
the vertex set of $Q_{n/2} \times \{0\}$ (for $0 \in
\R^{\frac{n^2}{4}}$).  In the same way, the coordinates in the second
half can be permuted so that the rows of odd vertices equal the
vertices of $\{0\} \times Q_{n/2}$ (for $0 \in \R^{\frac{n^2}{4}}$). 
Since $0$ is not in the affine hull of $Q_{n/2}$, we deduce that
$\DP_n$ is a join of two copies of $Q_{n/2}$. Hence, its dimension
equals $2 \dim(Q_{n/2}) + 1 = 2 (n-2)+1=2n-3$ by Theorem~\ref{theo1}.
\end{proof}

\section{Lattice properties}

$\DP_n$ and $Q_n$ are {\em lattice polytopes}, i.e.\ their vertices
lie in the lattice $\Z^{n \times n}$ of integral vectors. It is readily
checked that all above affine isomorphisms respect lattice points. In
this section, we will show that these lattice polytopes have
especially nice properties which allow to completely describe their
Ehrhart $h^*$-vectors.

A $d$-dimensional lattice polytope $P$ containing $0$ in its interior
is {\em reflexive}, if its polar (or dual) polytope
\begin{align*}
  P^*:=\{x\in\R^d\mid \langle x,v\rangle\ge -1\; \forall\, v\in P\}
\end{align*}
is again a lattice polytope (in the dual lattice). This notion was
introduced by Batyrev in \cite{Batyrev94}. A generalization of this is
the class of Gorenstein polytopes.  A lattice polytope is a {\em
  Gorenstein polytope of codegree $k$}, if there is a positive integer
$k$ and an interior lattice point $m$ in $kP$ such that $kP-m$ is a
reflexive polytope. Such polytopes play an important role in the
classification of Calabi-Yau manifolds for string theory.  See
\cite{1161.14037} for basic properties. The next proposition tells us
that the polytopes $Q_n$ belong to this class. The {\em normalized
volume of $\R^n$}   is the volume form which assigns to the standard
simplex the volume $1$.

\begin{proposition}
  Let $n$ be arbitrary.  The polytope $Q_n$ is Gorenstein of codegree
  $n$ and normalized volume $n$.
\label{gorst}
\end{proposition}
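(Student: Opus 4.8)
The plan is to read everything off from the facet description in Theorem~\ref{theo1} together with the free-sum structure established there. Throughout I write $\mathbf{1}$ for the all-ones vector in $\R^{n^2}$, I let $c=\frac1n\mathbf{1}$ be the common barycenter of the two simplices $S_1=\conv(v_0,\dots,v_{n-1})$ and $S_2=\conv(w_0,\dots,w_{n-1})$ making up $Q_n$, and I set $L:=\aff(Q_n)-c$ and $\Lambda:=\Z^{n^2}\cap L$ for the linear hull and its lattice.

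\emph{Gorenstein property.} I would take $m:=\mathbf{1}=n\,c$ as the candidate Gorenstein point. Since $c$ lies in the relative interior of $Q_n$, the lattice point $m$ lies in the relative interior of $nQ_n$, so by the definition it suffices to prove that $nQ_n-\mathbf{1}$ is reflexive in order to conclude that $Q_n$ is Gorenstein of codegree $n$ (the reflexive dilate, hence the codegree, being unique). By Theorem~\ref{theo1} the facets of $Q_n$, and hence of $nQ_n$, are cut out by $x_i\ge 0$, so after the translation $y=x-\mathbf{1}$ the facet hyperplanes of $nQ_n-\mathbf{1}$ become $\{\,\langle -e_i,y\rangle=1\,\}$ for $0\le i\le n^2-1$. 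Reflexivity then amounts to the statement that each facet lies at lattice distance exactly $1$ from the origin, i.e.\ that the functional $y\mapsto -y_i$ is primitive on $\Lambda$. This is immediate: because $\{x_i=0\}$ is a \emph{proper} facet, some vertex has $i$-th coordinate $1$ and another has it $0$; their difference lies in $\Lambda$ and has $i$-th coordinate $1$, so $y\mapsto y_i$ is surjective onto $\Z$ on $\Lambda$. Thus every facet sits at lattice distance $1$ and $nQ_n-\mathbf{1}$ is reflexive.

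\emph{Volume.} For the normalized volume I would exploit that $Q_n=S_1\oplus S_2$ is a free sum of two unimodular simplices $\cong\Delta_{n-1}$ glued at $c$. Writing $\Lambda_i:=\Z^{n^2}\cap(\aff(S_i)-c)$ and $\Lambda':=\Lambda_1\oplus\Lambda_2\subseteq\Lambda$, the standard multiplicativity of normalized volume under free sums gives $\operatorname{Vol}_{\Lambda'}(Q_n)=\operatorname{Vol}(\Delta_{n-1})\cdot\operatorname{Vol}(\Delta_{n-1})=1$. Passing from the coarser lattice $\Lambda'$ to the true lattice $\Lambda$ multiplies the normalized volume by the index, so $\operatorname{Vol}(Q_n)=[\Lambda:\Lambda']$. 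The one genuine calculation left is this index: parametrizing a point of $L$ by its components in $L_1:=\aff(S_1)-c$ and $L_2:=\aff(S_2)-c$ and writing its coordinates out block by block, integrality of the point forces a single common fractional offset $\theta\in\frac1n\Z$. Hence $\Lambda/\Lambda'\cong\Z/n$ is cyclic of order $n$, generated by the class of $v_0-w_0$, and $\operatorname{Vol}(Q_n)=n$.

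I expect the index computation to be the only real obstacle. Because the free sum is formed at the barycenter $c$, which is \emph{not} a lattice point, the ambient lattice $\Lambda$ is strictly finer than $\Lambda_1\oplus\Lambda_2$, and the entire content of the volume claim is that this refinement has index exactly $n$; the Gorenstein statement, by contrast, is essentially formal once the facet list $x_i\ge 0$ of Theorem~\ref{theo1} is available.
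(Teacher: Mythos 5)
Your proof is correct, and while the Gorenstein half coincides with the paper's argument, the volume half takes a genuinely different route. For the Gorenstein property both you and the paper rely on Theorem~\ref{theo1} in the same way: the point $\frac1n\mathbf{1}$ is interior, $m=\mathbf{1}$ works, and every facet $\{x_i=0\}$ of $nQ_n$ lies at lattice distance $1$ from $m$; you merely spell out the primitivity check (a difference of two vertices realizes the value $1$ of the $i$-th coordinate functional on $\Lambda$) that the paper compresses into the phrase that $\frac1n\mathbf{1}$ has ``equal integral distance $1/n$ to all facets''. For the volume, however, the paper stays entirely with the facet description: it decomposes the reflexive polytope $nQ_n-m$ into $n^2$ pyramids with apex $m$ over the facets, which are unimodular simplices of $Q_n$ and hence have normalized volume $n^{2n-3}$ after dilation; since each apex has lattice height $1$, the volumes sum to $n^{2n-1}$, and rescaling by $n^{-(2n-2)}$ gives $n$. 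You instead exploit the free-sum structure: multiplicativity of normalized volume for a free sum glued at a point lying in both summands (satisfied here, since $c$ is the common barycenter of $S_1$ and $S_2$), unimodularity of the two simplices with respect to their induced lattices $\Lambda_1,\Lambda_2$, and then the single index computation $[\Lambda:\Lambda_1\oplus\Lambda_2]=n$. Your sketch of that index is right and completes correctly: writing a point of $L$ with $L_2$-component having blocks $\sigma^j(\mu)$ for the cyclic shift $\sigma$, integrality of consecutive block differences forces $\mu-\sigma(\mu)\in\Z^n$, hence all coordinates of $\mu$ are congruent to a common $\theta\bmod\Z$ with $n\theta\in\Z$, and $v_0-w_0$ realizes $\theta=1/n$, so $\Lambda/\Lambda'\cong\Z/n$. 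As for what each approach buys: the paper's pyramid count needs no lattice-index bookkeeping and uses only data already in Theorem~\ref{theo1}, while your argument isolates the arithmetic source of the answer $n$ — the barycentric gluing makes $\Lambda$ strictly finer than $\Lambda_1\oplus\Lambda_2$, by index exactly $n$ — and would transfer to other free sums; the price is that you must verify the unimodularity of $S_1,S_2$ in their own lattices (true, and no harder than the paper's briefly justified claim that all facets are unimodular) and invoke the multiplicativity fact, whose hypothesis that the gluing point lie in both summands you correctly ensure.
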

\begin{proof}
  By Theorem~\ref{theo1}, the point $\frac1n(1,1,\ldots,1)$ is an
  interior point of $Q_n$ with equal integral distance $1/n$ to all
  facets, and $m:=(1,1,\ldots, 1)$ is the unique interior lattice
  point in $nQ_n$.  Hence $nQ_n-m$ is a reflexive polytope.

  By Theorem~\ref{theo1}, all facets of $Q_n$ are simplices of facet
  width $1$, hence they are all unimodular.  As we have seen,
  multiplying with n gives (up to translation) a reflexive polytope
  with the unique interior lattice point $m=(1,1,\ldots,1)$. The
  normalized volume of $nQ_n$ is the sum of the volumes of $n^2$
  pyramids over facets with apex $m$. But in $nQ_n$ each facet has
  normalized volume $n^{2n-3}$, and the apex has lattice distance one
  from the facet, so each pyramid has normalized volume
  $n^{2n-3}$. There are $n^2$ of these pyramids, so the normalized
  volume of $nQ_n$ equals $n^{2n-1}$. Dividing by
  $n^{2n-2}$ to get from $nQ_n$ back to $Q_n$ gives the normalized
  volume $n$ of $Q_n$.
\end{proof}
A polytope $P$ is \emph{compressed} if every so-called pulling
triangulation is regular and unimodular. Equivalently, $P$ is
compressed if for any supporting inequality $a^tx\le b$ with a
primitive integral normal $a$, i.e. with a normal vector whose entries
are integers and which is not an integral multiple of some other
integer vector, the polytope is contained in the set $\{x\mid b-1\le
a^tx\le b\}$.  For a more detailed explanation of these terms we refer
to~\cite{pre05768656}. This property has strong implications on the
associated toric ideal, see e.g.\ \cite{Sturmfels96}.  The next
proposition follows immediately from Theorem 1.1 of \cite{0984.13014}
and Theorem~\ref{theo1}.
\begin{proposition}
  Let $n$ be odd or even.  The polytope $Q_n$ is compressed.\qed
\end{proposition}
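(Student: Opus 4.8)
The plan is to invoke the characterization of compressedness recalled immediately before the statement, which reduces the verification to the facet-defining inequalities, and then to read those facets off Theorem~\ref{theo1}. Concretely, Theorem 1.1 of \cite{0984.13014} says that it suffices to check the slab condition on every facet-supporting inequality written with a primitive integral normal: if for each such inequality $a^tx \le b$ the polytope lies in $\{x \mid b-1 \le a^tx \le b\}$, then the polytope is compressed. By Theorem~\ref{theo1}, an irredundant facet description of $Q_n$ inside its affine hull is given precisely by the inequalities $x_i \ge 0$ for $0 \le i \le n^2-1$. Rewriting $x_i \ge 0$ as $\langle -e_i, x\rangle \le 0$, the normal $-e_i$ is a standard basis vector (up to sign), hence primitive, so these are exactly the inequalities I must test.

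First I would note that $Q_n$ is a $0/1$-polytope: its vertices are the rows of $W$ in \eqref{eq:qnvert}, all of whose entries lie in $\{0,1\}$, so every point of $Q_n$ has each coordinate in $[0,1]$. This can also be seen intrinsically inside the affine hull: from the equations \eqref{eq:aff}, namely $1 = \sum_{i=ln}^{(l+1)n-1} x_i$ for each block $0 \le l \le n-1$, the facet constraints $x_j \ge 0$ force $x_i \le 1$ for every coordinate $i$. Either way, along the normal direction of the facet $x_i \ge 0$ one has $0 \le x_i \le 1$ on all of $Q_n$, i.e. $Q_n$ is contained in the width-one slab $\{x \mid 0 \le x_i \le 1\}$. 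This is exactly the condition $b-1 \le a^tx \le b$ for $a = -e_i$ and $b = 0$. Applying Theorem 1.1 of \cite{0984.13014} then yields at once that $Q_n$ is compressed, uniformly in $n$.

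There is essentially no genuine obstacle here beyond bookkeeping, which is why the proposition follows immediately. The only points requiring a word of care are, first, that the facet list in Theorem~\ref{theo1} is complete and irredundant, so that no supporting hyperplane outside this list needs to be examined once the theorem has reduced the check to facets; and second, that the normals $-e_i$ are indeed primitive, which is clear since each has a single nonzero entry equal to $\pm 1$. Both are already guaranteed by the statement of Theorem~\ref{theo1} and by the $0/1$ nature of the vertices, so the argument closes without further computation.
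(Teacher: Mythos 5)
Your proof is correct and follows exactly the paper's route: the paper derives the proposition ``immediately from Theorem 1.1 of \cite{0984.13014} and Theorem~\ref{theo1}'', which is precisely your argument with the immediate verification spelled out (the facets are $x_i\ge 0$ with primitive normals $\pm e_i$, and the $0/1$-vertices force $Q_n\subseteq\{x\mid 0\le x_i\le 1\}$, i.e.\ the width-one slab condition). No discrepancy to report.
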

The \emph{Ehrhart polynomial} $L_P(k):=|kP\cap\Z^d|$ of a
$d$-dimensional lattice polytope counts the number of integral points in
integral dilates of $P$. It is well known that the generating function
of $L_P$ is given by
\begin{align*}
  \sum_{m\ge 0}L_P(m)t^m\ =\ \frac{h^*(t)}{(1-t)^{d+1}}
\end{align*}
for some polynomial $h^*$ of degree at most $d$ with integral
non-negative coefficients, see~\cite{BeckRobbins}.  Hence, determining
the Ehrhart polynomial is equivalent to finding the $h^*$-vector (also
called $\delta$-vector) of coefficients of $h^*(t)$. As is well-known,
$P$ is Gorenstein if and only if the $h^*$-vector is symmetric. The
following theorem shows that in our case this vector has a
particularly nice form.
\begin{theorem}
  Let $n$ be odd or even.  The $h^*$-vector of $Q_n$ satisfies $h^*_i=1$
  for $0\le i\le n-1$ and $h_i^*=0$ otherwise.
\label{theo3}
\end{theorem}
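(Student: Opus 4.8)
The plan is to compute the Ehrhart series of $Q_n$ directly, by counting lattice points in each dilate $mQ_n$ using the explicit vertices listed as the rows of $W$. Viewing $\R^{n^2}$ as $n$ consecutive blocks of length $n$ and writing $e_0,\ldots,e_{n-1}$ for the standard basis of $\R^n$, the first $n$ vertices are $v_k=(e_k,e_k,\ldots,e_k)$ and the last $n$ are $w_k=(e_k,e_{k+1},\ldots,e_{k+n-1})$, with all indices read mod $n$. Writing a point of $mQ_n$ as a nonnegative combination $\sum_k A_k v_k+\sum_k B_k w_k$ with $\sum_k A_k+\sum_k B_k=m$, the entry of the resulting array in block $j$ at position $l$ is $A_l+B_{l-j}$. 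First I would observe that this array is integral exactly when every $A_l+B_{l'}$ lies in $\Z$, which forces all $A_l$ to share a common fractional part and all $B_{l'}$ to share a common fractional part, the two summing to an integer.

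The delicate point, and the main obstacle, is that the combination $(A,B)$ is not unique. By Theorem~\ref{theo1} the $2n$ vertices satisfy a single affine relation, $\sum_k v_k=\sum_k w_k$, so $(A,B)$ and $(A+s\mathbf 1,\,B-s\mathbf 1)$ represent the same point for every $s$. I would exploit this to normalize each lattice point to its unique representative with $A,B\in\Z_{\ge 0}^n$ and $\min_l A_l=0$: the fractional-part analysis lets one shift to integral nonnegative $(A,B)$, and an integer shift then drives $\min_l A_l$ to $0$ while keeping $B\ge 0$. The work is to verify that this assignment is a bijection — that distinct normalized pairs give distinct points (the only residual symmetry is the integer shift, which changes $\min_l A_l$), and conversely that every nonnegative integer pair with $\sum A+\sum B=m$ really lies in $mQ_n$, being a convex combination of vertices.

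Granting the bijection, $L_{Q_n}(m)$ is the number of pairs $(A,B)\in\Z_{\ge 0}^n\times\Z_{\ge 0}^n$ with $\sum A+\sum B=m$ and $\min_l A_l=0$, which is routine to enumerate. Tracking total degree by $t$, the $A$-block with $\min_l A_l=0$ contributes $\bigl((1-t)^{-n}-t^n(1-t)^{-n}\bigr)$ (all nonnegative, minus all $\ge 1$), while the $B$-block contributes $(1-t)^{-n}$, so
\begin{align*}
  \sum_{m\ge 0}L_{Q_n}(m)\,t^m \;=\; \frac{1-t^n}{(1-t)^{2n}} \;=\; \frac{1+t+\cdots+t^{n-1}}{(1-t)^{2n-1}}\,.
\end{align*}
Since $\dim Q_n=2n-2$ by Theorem~\ref{theo1}, the denominator is exactly $(1-t)^{\dim Q_n+1}$, whence $h^*(t)=1+t+\cdots+t^{n-1}$, giving $h^*_i=1$ for $0\le i\le n-1$ and $h^*_i=0$ otherwise. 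As a consistency check, $\deg h^*=n-1$ agrees with codegree $n$, and $h^*(1)=n$ matches the normalized volume from Proposition~\ref{gorst}.
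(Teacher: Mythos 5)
Your proof is correct, and it takes a genuinely different route from the paper's. The paper argues indirectly: since the codegree is $n$ and $\dim Q_n=2n-2$, the $h^*$-vector can have no nonzero entry beyond $h^*_{n-1}$; since $Q_n$ is Gorenstein (Proposition~\ref{gorst}) and compressed, hence has a regular unimodular triangulation, the Bruns--R\"omer theorem makes the $h^*$-vector symmetric and unimodal, forcing $h^*_i\ge 1$ for $0\le i\le n-1$; and the normalized volume $n$ then pins every entry to $1$. You instead count lattice points directly, and the steps you flag as delicate do go through: because $Q_n$ has $2n$ vertices but dimension $2n-2$ (Theorem~\ref{theo1}), the space of affine dependencies among the vertices is exactly one-dimensional, spanned by $\sum_k v_k-\sum_k w_k=0$, so the fiber over each point of $mQ_n$ is precisely the one-parameter shift family $(A+s\mathbf{1},B-s\mathbf{1})$; your fractional-part observation shows a lattice point forces a representative with all $A_l$ sharing one fractional part and all $B_{l'}$ the complementary one, and the single shift $s=\min_l A_l$ simultaneously makes $(A,B)$ integral and nonnegative with $\min_l A_l=0$, which pins $s$ and makes the normalization a genuine bijection (surjectivity onto integer pairs is clear, since any $(A,B)\in\Z_{\ge 0}^n\times\Z_{\ge 0}^n$ with coordinate sum $m$ is a scaled convex combination of vertices and yields integral entries $A_l+B_{l-j}$). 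The resulting generating function $(1-t^n)/(1-t)^{2n}=(1+t+\cdots+t^{n-1})/(1-t)^{2n-1}$ is correct, and the denominator matches $\dim Q_n+1$. What your approach buys is self-containedness: it needs only the dimension count from Theorem~\ref{theo1}, avoids compressedness, the Gorenstein property, and the Bruns--R\"omer unimodality theorem entirely, and it reproves as by-products the normalized volume $n$ (as $h^*(1)$) and the Gorenstein property (via the symmetry of $h^*$), whereas the paper relies on Proposition~\ref{gorst} as an input. What the paper's approach buys is brevity and a reusable template: whenever volume, codegree, and a regular unimodular triangulation are already known, the $h^*$-vector follows with no explicit lattice-point enumeration.
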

\begin{proof}
  Since the codegree of $Q_n$ is $n$ and its dimension is $2n-2$ by
  Theorem~\ref{theo1}, the maximal non-zero entry of the $h^*$-vector
  has to be $h^*_{n-1}$, see~\cite{BeckRobbins}.  By a theorem of
  Bruns and R{\"o}mer~\cite{1108.52013} we know that the $h^*$-vector
  of a Gorenstein polytope that has a regular unimodular triangulation
  is symmetric and unimodal. In particular, $h^*_i \ge 1$ for $i=0,
  \ldots, n-1$.  Since by Proposition~\ref{gorst} the sum of the
  entries of the $h^*$-vector equals $n$, the statement follows.
\end{proof}
In particular, if $n$ is odd, the previous result describes the
$h^*$-vector of $\DP_n$. Finally, let us deal with the even case.

\begin{corollary}
  Let $n$ be even.  The $h^*$-vector of $\DP_n$ equals
  \[(1, 2, 3, \ldots, \frac{n}{2}-1, \frac{n}{2}, \frac{n}{2}-1,
  \ldots, 2, 1).\] In particular, the polytope $\DP_n$ is Gorenstein
  of codegree $n$ and normalized volume $n^2/4$.\qed
\label{coro}
\end{corollary}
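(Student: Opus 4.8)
The plan is to derive everything from the join decomposition $\DP_n = Q_{n/2} \star Q_{n/2}$ established in Theorem~\ref{theo2}, combined with the Ehrhart data of $Q_{n/2}$ from Theorem~\ref{theo3}. The single structural input I need is that the Ehrhart series, and hence the $h^*$-polynomial, is multiplicative under joins. I would work in the explicit lattice-preserving realization from the proof of Theorem~\ref{theo2}, in which $\DP_n$ is the convex hull of $Q_{n/2}\times\{0\}$ and $\{0\}\times Q_{n/2}$ inside $\R^{n^2/4}\times\R^{n^2/4}$.

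First I would analyze the lattice points of the $k$-th dilate $k\DP_n$. Every point of $k\DP_n$ has the form $(i\,q,\,(k-i)\,q')$ with $q,q'\in Q_{n/2}$, where $i=k\lambda$ is the common value of the first-half block sums cut out by the affine equations~\eqref{eq:aff} of $Q_{n/2}$ (applied with $n$ replaced by $n/2$, so the blocks have size $n/2$). Since every coordinate of a lattice point is an integer, each such block sum is an integer, forcing $i\in\{0,\dots,k\}$; the first half then ranges over $iQ_{n/2}\cap\Z^{n^2/4}$ and the second half over $(k-i)Q_{n/2}\cap\Z^{n^2/4}$, and these two choices are independent. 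This yields the convolution identity
\[
L_{\DP_n}(k)=\sum_{i=0}^{k}L_{Q_{n/2}}(i)\,L_{Q_{n/2}}(k-i),
\]
so the Ehrhart series of $\DP_n$ is the product of the Ehrhart series of the two factors. Comparing denominators via $\dim\DP_n=2\dim Q_{n/2}+1$ from Theorem~\ref{theo2} then gives $h^*_{\DP_n}(t)=\bigl(h^*_{Q_{n/2}}(t)\bigr)^2$.

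By Theorem~\ref{theo3} we have $h^*_{Q_{n/2}}(t)=1+t+\dots+t^{n/2-1}$, whence $h^*_{\DP_n}(t)=(1+t+\dots+t^{n/2-1})^2$, and its coefficient sequence is exactly $(1,2,\dots,\tfrac n2-1,\tfrac n2,\tfrac n2-1,\dots,2,1)$, which proves the first assertion. This polynomial is palindromic, so the $h^*$-vector is symmetric and $\DP_n$ is Gorenstein. Its degree is $n-2$, and since $\dim\DP_n=2n-3$ the codegree equals $(2n-3)+1-(n-2)=n$. Finally, the normalized volume is the sum of the $h^*$-entries, namely $2\bigl(1+\dots+(\tfrac n2-1)\bigr)+\tfrac n2=(\tfrac n2)^2=n^2/4$.

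The only delicate point, and the step I would guard most carefully, is the lattice-compatibility of the convolution: I must verify that the block sums of an integral point of $k\DP_n$ are genuinely integral and all equal, and that, once this integer $i$ is fixed, the two halves sweep out the honest lattice dilates $iQ_{n/2}$ and $(k-i)Q_{n/2}$ with no overcounting. This is precisely where the join rather than free-sum structure matters: the affine hull of $Q_{n/2}$ misses the origin, which simultaneously produces the extra join dimension and lets the block sum function as a true dilation parameter. It also rests on the coordinate permutations of Theorem~\ref{theo2} being lattice isomorphisms, as recorded at the start of this section. Everything after the convolution identity is routine polynomial bookkeeping.
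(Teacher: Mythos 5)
Your proposal is correct and takes essentially the same route as the paper: the join decomposition of Theorem~\ref{theo2}, multiplicativity of the $h^*$-polynomial across the two lattice-distance-one copies of $Q_{n/2}$, and Theorem~\ref{theo3} for the factors. The only difference is that where the paper exhibits the integral functional separating the two copies and then cites the multiplicativity as a known fact \cite[Ex.~3.32]{BeckRobbins}, you prove it directly via the Ehrhart convolution identity --- your block-sum dilation parameter is precisely that functional --- which is a sound, self-contained substitute.
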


\begin{proof}

By the proof of Theorem~\ref{theo2}, $\DP_n$ is given up to coordinate permutation as 
the convex hull of the rows of\begin{align*}
  \left[
    \begin{array}{ll}
      \tilde{W}&0\\
      0&\tilde{W}
    \end{array}\right]\,,
\end{align*}
where $\tilde{W}$ is the $n \times (\frac{n}{2})^2$-matrix whose rows
are the vertices of $Q_{\frac n2}$ as given in (\ref{eq:qnvert}). The
integral linear functional which sums the first $\frac n2$ coordinates
evaluates to $1$ on the first $\frac n2$ rows, and to $0$ on the
second half. Hence, the two copies of $Q_{\frac n2}$ (say, $P_1 \times
\{0\}$ and $\{0\} \times P_2$) have lattice distance $1$ in the
lattice $\Z^{\frac{n^2}{2}}\cap\aff \DP_n$. In other words, there is
an affine isomorphism respecting lattice points which maps $\DP_n$
onto the convex hull of $P_1 \times \{0\} \times \{1\}$ and $\{0\}
\times P_2 \times \{0\}$ in $\R^{\frac{n^2}{2}+1}$. Therefore, the
statement follows from the well-known fact
\cite[Ex. 3.32]{BeckRobbins}  
that in this case the $h^*$-polynomial equals the product of the
$h^*$-polynomials of $P_1$ and $P_2$.
\end{proof}
\section{Substructures}

In ~\cite{Baumeister2007} the authors discussed which subgroups of a
permutation group yield faces of $P(G)$. An obvious class of such subgroups are
stabilizers: 

Take a partition $[n] := \{1, \ldots, n\} = \bigsqcup I_i$. Then the polytope of the
stabilizer of the subsets $I_i$
$$\operatorname{stab}(G;(I_i)_i) := \{
\sigma \in G : \sigma(I_i)=I_i \text{ for all } i \} \leq G$$ is a face of
$P(G)$. The authors conjecture that there are no other examples.
\medskip\\
{\bf Conjecture 5.8} ~\cite{Baumeister2007}
    Let $G \le S_n$. Suppose $H \le G$ is a subgroup such that $P(H)
    \preceq P(G)$ is a face. Then $H = \operatorname{stab}(G;(I_i)_i)$
    for a partition $[n] = \bigsqcup I_i$.
\medskip\\

 We have verified the conjecture for $G = S_n$ as well as for $G\leq S_n$ a cyclic group, see Proposition~5.9
\cite{Baumeister2007}. Meanwhile Jessica Nowack and Daniel Heinrich studied this question for the dihedral
groups in their Diploma theses.

\begin{prop}\cite{No11, Hei11}
 The Conjecture 5.8 holds for $G = D_n \leq S_n$ for every  $n$.
\end{prop}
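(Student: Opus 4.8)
The plan is to establish the following general criterion, which in particular settles the proposition for $G=D_n$: for any subgroup $H\le G$, the polytope $P(H)$ is a face of $P(G)$ if and only if $H$ is a set--stabilizer. One direction is the easy one already invoked in the excerpt: if $H=\operatorname{stab}(G;(I_i)_i)$, then the block--indicator functional $C=(C_{ij})$ with $C_{ij}=1$ when $i,j$ share a block and $C_{ij}=0$ otherwise satisfies $\langle C,M_\sigma\rangle=\#\{i:\ i\text{ and its image under }\sigma\text{ lie in a common block}\}$, whose maximum $n$ is attained on $M_\sigma$ exactly when $\sigma$ fixes every block. So I only have to prove the converse: a face--giving $H$ is a set--stabilizer.

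The device I would use is the vertex barycenter $B_H:=\tfrac1{|H|}\sum_{\sigma\in H}M_\sigma$. Writing $\mathcal O$ for the partition of $[n]$ into $H$--orbits, set $H^+:=\operatorname{stab}(G;\mathcal O)$. I would first record two structural facts. (i) $H\subseteq H^+$, and $H$ and $H^+$ have \emph{the same} orbits on $[n]$: since $H^+$ fixes every $H$--orbit setwise its orbits refine $\mathcal O$, while $H\le H^+$ forces them to coincide. (ii) $B_H$ depends only on the orbit partition, because its $(i,j)$ entry equals $1/|O|$ when $i,j$ lie in a common orbit $O$ and $0$ otherwise, so $B_H$ is block--diagonal with uniform blocks over the orbits. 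Facts (i) and (ii) together yield the crucial identity $B_H=B_{H^+}$.

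The heart of the argument is then a single averaging step. Suppose $P(H)$ is a face, cut out by $\langle C,\cdot\rangle$ with $\langle C,M_\sigma\rangle=c$ for $\sigma\in H$ and $\langle C,M_g\rangle<c$ for $g\in G\setminus H$. Evaluating this linear functional on $B_H=B_{H^+}$ and splitting $H^+=H\sqcup(H^+\setminus H)$ gives
\[
c=\langle C,B_{H^+}\rangle
=\frac{1}{|H^+|}\Bigl(|H|\,c+\sum_{g\in H^+\setminus H}\langle C,M_g\rangle\Bigr).
\]
Were $H$ a proper subgroup of $H^+$, the last sum would run over a nonempty set on which $\langle C,M_g\rangle<c$, making the right side strictly smaller than $c$ --- a contradiction. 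Hence $H=H^+=\operatorname{stab}(G;\mathcal O)$ is a set--stabilizer, as claimed. Geometrically, $B_H=B_{H^+}$ says that the barycenter of the putative face already coincides with the barycenter of the strictly larger vertex set $\{M_g:g\in H^+\}$, which no supporting functional of a proper face can tolerate; the smallest instance is the coincidence $M_e+M_{\rho^2}=M_{(0\,2)}+M_{(1\,3)}$ inside $\DP_4$, exhibiting $\langle\rho^2\rangle$ as a non--face.

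To make the dihedral case fully explicit I would finally run through the subgroup lattice of $D_n$ --- the rotation subgroups $\langle\rho^{n/d}\rangle$ and the dihedral subgroups $\langle\rho^{n/d},\tau\rho^{j}\rangle$ --- and compute $H^+$ in each case, confirming that the non--stabilizers are exactly the $H$ for which some element of $D_n\setminus H$ still fixes every $H$--orbit setwise. For the rotation subgroups a short congruence computation shows this occurs precisely for the full rotation group $\langle\rho\rangle$ and, when $n$ is even, for $\langle\rho^{2}\rangle$. The only place I expect genuine care rather than formula--pushing is the bookkeeping for the dihedral subgroups, where a reflection fuses two rotation--orbits into one: there one must check whether the remaining elements of $D_n$ \emph{fix} each fused orbit setwise or merely \emph{permute} the orbits among themselves, since only the former enlarge $H^+$. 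The averaging argument is exactly what lets me bypass this case analysis in the main proof, so I regard fact (ii) --- that $B_H$ is determined by the orbit partition --- as the genuine crux.
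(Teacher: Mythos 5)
Your proposal is correct, but it takes a genuinely different --- and strictly more general --- route than the paper. The paper's proof is a sketch that leans on the Diploma theses of Nowack and Heinrich and proceeds by working through the subgroup lattice of $D_n$: for $n$ odd one shows directly that the rotation subgroup is not a face and that every remaining subgroup is the stabilizer of its orbit partition; for $n$ even the bulk of the work is to identify the rotations, the squares of the rotations, and the group generated by the squares and the edge-reflections as precisely the subgroups that fail to give faces, all others again being orbit stabilizers. Your barycenter lemma bypasses this classification entirely, and its three ingredients check out: by orbit--stabilizer the $(i,j)$-entry of $B_H$ is $1/|O|$ when $i,j$ lie in a common $H$-orbit $O$ and $0$ otherwise, so $B_H$ depends only on the orbit partition $\mathcal O$; the stabilizer $H^+=\operatorname{stab}(G;\mathcal O)$ has the same orbits as $H$, whence $B_H=B_{H^+}$; and no functional supporting a face with vertex set $\{M_\sigma : \sigma\in H\}$ can be constant equal to $c$ on $H$ while averaging to $c$ over $H^+\supsetneq H$ --- equivalently, a face containing a point that is a positive convex combination of vertices must contain all those vertices (your $\DP_4$ identity $M_e+M_{\rho^2}=M_{(0\,2)}+M_{(1\,3)}$ is exactly this phenomenon in miniature, and I verified the general mechanism on further examples such as $\langle(12)(34)\rangle\le S_4$). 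Two consequences of the comparison are worth recording. First, since the conjecture's conclusion is precisely that $H=\operatorname{stab}(G;(I_i)_i)$ for some partition, your averaging step alone proves the proposition; your final run through the subgroup lattice of $D_n$ (where you rightly flag the fused orbits of dihedral subgroups as the only delicate bookkeeping) is needed only for the sharper statement in the paper's sketch --- the exact list of subgroups that \emph{do} yield faces --- and there you would still owe the details. Second, nothing in your main argument uses $G=D_n$: as quoted in this paper, Conjecture 5.8 would follow for \emph{every} $G\le S_n$, which is strictly more than the paper or the cited theses claim. That is a strong enough conclusion that you should state the general lemma explicitly and check it against the original formulation of the conjecture in the cited reference, rather than leaving it implicit inside the dihedral case; but against the statement as given here, your proof is complete where the paper's is a case analysis.
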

{\em Sketch of the proof.}
  For $n$ odd Heinrich first shows that, if $H$ is the subgroup of all
  rotations of $G$, then $P_H$ is not a face of $P_G$.
  The remaining subgroups are precisely the stabilizers of their
  orbits, see  Theorem~7.1.1 of \cite{Hei11}.

  For $n$ even the main work is to show that the subgroup of all
  rotations, the subgroup of the squares of the rotations and finally 
  the subgroup generated by the squares of the rotations and by the
  reflections through two edges are precisely those subgroups $H$ of
  $G$ for which $P_H$ is not a face of $P_G$. Nowack shows that the 
  remaining subgroups are precisely the stabilizers of their
  orbits, see Section~4.2 of \cite{No11}. \hfill \qed

\bibliographystyle{alpha}
\bibliography{dihedral2}

\end{document}